\documentclass[a4 paper,10pt,twoside]{article}

\usepackage[centertags]{amsmath}
\usepackage{amssymb}
\usepackage{amsfonts}
\usepackage{amsthm}
\usepackage{newlfont}
\usepackage{amscd}
\usepackage[numbers]{natbib}
\usepackage{tikz}
\usepackage[top=1.7in, bottom=1.7in, left=1.5in, right=1.5in]{geometry}
\title{\large Euler’s  $\ell$-totients and Riemann hypothesis}
\author{}
\date{}
\theoremstyle{plain}
\newtheorem{theorem}{Theorem}[section]

\newtheorem{lemma}[theorem]{Lemma}
\newtheorem{corollary}[theorem]{Corollary}

\theoremstyle{example}

\numberwithin{equation}{section}
\theoremstyle{definition}
\newtheorem{definition}[theorem]{Definition}
\theoremstyle{remark}
\newtheorem{remark}[theorem]{Remark}
\begin{document}
\maketitle
\begin{center}
\textbf{Ahmed Gaber  }\vspace{0.15cm}\\
 Mathematics Education Program, Faculty of Education and Arts, Sohar 3111, Oman\\
 Department of Mathematics, Faculty of Science, Ain Shams University, Cairo, Egypt\\
  a.gaber@sci.asu.edu.eg\\
\end{center}
\vspace{0.1 cm}
\begin{abstract}
        This papers developes a  new analytic lens for investigating the Riemann hypothesis. For each fixed integer $\ell \ge1$ define the Euler’s  $\ell$-totient function $\varphi_\ell$ by
\[
\varphi_\ell(n):=n\prod_{\substack{p\ \text{prime}\\ v_p(n)\ge \ell}}\Big(1-\frac{1}{p}\Big)
\]
and its summatory function $\Phi_\ell(x):=\sum_{n\le x}\varphi_\ell (n)$. An analytic treatment of the generalized Euler totient function $\phi_\ell(n)$, including its Euler product expansion, meromorphic continuation and pole structure is developed. For each $\ell$, a necessary criterion and a sufficient criterion for the Riemann hypothesis are obtained via $\Phi_\ell(x)$.
\end{abstract}
\begin{flushleft}
\textbf{2020 Mathematics Subject Classification}:11N37, 11M26, 11N64. \\
\end{flushleft}
\begin{flushleft}
\textbf{Keywords}: Euler’s  $\ell$-totient function, Riemann hypothesis, Dirichlet series, Mertens function, Zeta zeros, Asymptotic formulas.
\end{flushleft}

\section{Introduction and motivation}
The Riemann hypothesis (RH) has resisted proof for over 150 years. Having multiple equivalent formulations reveals the hypothesis’s deep
connections across mathematics and provides more avenues for potential proof. Different formulations may be more amenable to different techniques, and over the years a vast landscape of equivalences has been discovered, spanning analysis, number theory, and even probability and group theory. Notable examples include the Nyman–Beurling criterion and its various generalizations \cite{BBLS2002, DarsesHillion2021, DeRoton2007, Yang2016, Yang2025}, recent criteria connected to the generalized Riemann hypothesis \cite{GargMaji2023}, formulations involving the Lindelöf hypothesis for primes and general sequences \cite{banks2022riemann, broucke2024lindelof,  gonek2020lindelof}, as well as equivalences stated in terms of highly composite numbers, integral equalities, permutation groups, and abundant numbers \cite{cavendish2024, deleglise2019, nazardonyavi2014, nicolas2022, sekatskii2023some}. The two volumes \cite{Broughan2017I, Broughan2017II} compile many of these diverse equivalences, illustrating the remarkable richness of the problem. In this paper, we introduce a new family of equivalent criteria parameterized by a positive integer \(\ell \geq 1\), arising from the study of  \(\phi_\ell\) and its summatory function \(\Phi_\ell\).

The introduction of the function \(\phi_\ell\) is motivated by the following idea: consider the set $B_\ell(n)$ consists of all integers from $1$ to $n$ that are not divisible by any prime appearing with multiplicity at least $\ell$ in the factorization of $n$. That is,   
\[
B_\ell(n) := \{\, m \in \{1,2,\dots,n\} \mid \text{for every prime } p \text{ with } p^\ell \mid n,\ p \nmid m \,\}.
\]
The set $B_\ell(n)$ provides a combinatorial interpretation of the Euler’s  $\ell$-totient (simply,   $\ell$-totient) function $\phi_\ell(n)$. Indeed, as proved in Theorem \ref{t1},
\[
\phi_\ell(n) = |B_\ell(n)|.
\]
This interpretation generalizes the classical fact that $\phi(n)$ counts integers coprime to $n$. 
The parameter $\ell$ acts as a threshold: only primes with exponent at least $\ell$ in $n$ impose a restriction. 
Thus $\phi_\ell(n)$ measures how many integers up to $n$ avoid the ``$\ell$‑powerful part'' of $n$.
The $\ell$-totient interpolates between: The Mertens function (via its Dirichlet series structure), average behavior of integers with restricted prime power divisibility, and explicit formulas involving zeta zeros.

The study of  $\phi_\ell$ is also motivated by seeking profound connections to RH that reveal new structural insights into number theory. The parameter $\ell$ modulates how the zeros of the Riemann zeta function $\zeta(s)$ affect the error term. Studying the dependence on $\ell$ might reveal subtle properties of zeta zeros that are not apparent from single formulations. The family provides a hierarchy of statements to test numerically. If RH were false, it might manifest more clearly in some $\ell$-values than others.

Let $M(x) := \sum_{n \le x} \mu(n).$ The 
littlewood classical equivalence $M(x) = O_{}(x^{1/2+\varepsilon})$ if and only if RH is true connects the Möbius function directly to zeta zeros, see see \citep[p.~370]{Titchmarsh1986}. Our result utilizes  this in a non-trivial way: instead of studying $\mu(n)$ alone, we study the convolution $\phi_\ell(n) = n \sum_{d^\ell \mid n} \frac{\mu(d)}{d}$, which weights the Möbius function by arithmetic structure.
The asymptotic analysis of $\Phi_\ell(x)$ uncovers a beautiful structure: the main term comes from the pole at $s=2$, while the error term comes from poles at $s = 1 + \frac{\rho-1}{\ell}$, where $\rho$ are zeta zeros. Under RH, these poles align on the line $\Re(s) = 1 - \frac{1}{2\ell}$. This reveals that different $\ell$ values rescale the critical line in the complex plane. Thus the $\ell$-totient provides a microscope zooming in on different regions of the zeta function.

 The main result of this paper is Theorem \ref{main}, which establishes a new and flexible family of criteria for the Riemann hypothesis (RH) through the asymptotic behavior of the summatory function \(S_{\ell}(x)\). Specifically, we prove two complementary statements. First, assuming RH, we obtain for any fixed \(\ell \geq 1\) the estimate \(S_{\ell}(x) = \frac{x^2}{2\zeta(\ell+1)} + O_{\ell,\epsilon}(x)\); i.e., the error term is at most linear. Second, and more significantly, we show that for any integer \(\ell \geq 1\), if the error term satisfies the stronger bound 
\[
S_{\ell}(x) = \frac{x^2}{2\zeta(\ell+1)} + O\!\left(x^{1-\frac{1}{2\ell}+\epsilon}\right) \qquad (\text{for every } \epsilon>0),
\]
then the Riemann hypothesis must hold. This condition is not a direct converse of the first part but rather a sharper asymptotic requirement that forces all non‑trivial zeros of the Riemann zeta function to lie on the critical line. The exponent \(1-\frac{1}{2\ell}\) arises directly from the relationship between the parameter \(\ell\) and the nontrivial zeros \(\rho\) of the zeta function, as the poles of the associated Dirichlet series occur at points whose real parts are \(1 + \frac{\Re(\rho)-1}{\ell}\). This shows how \(\ell\) acts as a scaling parameter, transforming the critical line into a family of vertical lines and thereby offering multiple independent analytical perspectives on the Riemann hypothesis.
While the classical case $\ell=1$ recovers a known connection between the Mertens function and RH, the cases $\ell>1$ provide genuinely new analytic criteria.

\section{Preliminary results}
\begin{theorem}\label{t1}
For every integer $\ell \geq 1$ and every $n \in \mathbb{N}$,
\[
\phi_\ell(n) = |B_\ell(n)|.
\]
\end{theorem}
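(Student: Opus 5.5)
Let $P_\ell(n)$ be the set of primes $p$ with $v_p(n)\ge \ell$, i.e. with $p^\ell\mid n$, and put $R=\prod_{p\in P_\ell(n)}p$, the radical of the $\ell$-powerful part of $n$. If $P_\ell(n)$ is empty (for instance $n=1$ when $\ell\ge 1$, or any $n$ with all exponents below $\ell$), the product in the definition of $\varphi_\ell$ is empty. Then $\varphi_\ell(n)=n$, and every $m\le n$ lies in $B_\ell(n)$, so the claim holds. From now on I assume $P_\ell(n)\neq\emptyset$.

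By definition, $m\in\{1,\dots,n\}$ lies in $B_\ell(n)$ exactly when $\gcd(m,R)=1$. So $|B_\ell(n)|$ counts the integers in $[1,n]$ coprime to $R$. The plan is to compute this count by inclusion--exclusion, written via the Möbius function:
\[
|B_\ell(n)|=\sum_{m\le n}\sum_{d\mid \gcd(m,R)}\mu(d)=\sum_{d\mid R}\mu(d)\,\Big\lfloor \frac{n}{d}\Big\rfloor .
\]
The key observation is that every $d\mid R$ divides $n$, because $R\mid n$ (each $p\in P_\ell(n)$ divides $n$ since $\ell\ge1$). Hence $\lfloor n/d\rfloor=n/d$ exactly, with no floor error.

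Then $|B_\ell(n)|=n\sum_{d\mid R}\mu(d)/d=n\prod_{p\mid R}(1-1/p)$, using the multiplicativity of $d\mapsto\mu(d)/d$ over the squarefree divisors of $R$. This product is exactly the definition of $\varphi_\ell(n)$.

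As a cross-check, I would note that the sum $\sum_{d\mid R}\mu(d)/d$ equals $\sum_{d^\ell\mid n}\mu(d)/d$, the form quoted in the introduction. Indeed, a squarefree $d$ satisfies $d^\ell\mid n$ if and only if every prime factor of $d$ lies in $P_\ell(n)$, i.e. if and only if $d\mid R$; non-squarefree $d$ contribute nothing since $\mu(d)=0$.

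An alternative route is the Chinese remainder theorem. Since $R\mid n$, the interval $[1,n]$ splits into $n/R$ complete residue systems modulo $R$, each containing $\phi(R)=R\prod_{p\mid R}(1-1/p)$ residues coprime to $R$. Multiplying gives the same formula.

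There is no real obstacle in this argument. The only point needing care is the exactness $R\mid n$, which makes the floor functions disappear, together with the empty-product case handled at the start.
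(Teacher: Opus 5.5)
Your argument is correct, but your main route differs from the paper's. The paper takes $Q=\prod_{p\in P_\ell(n)}p$ and uses $Q\mid n$ to split $\{1,\dots,n\}$ into $n/Q$ consecutive blocks, each a complete residue system modulo $Q$. It then quotes the classical value $\varphi(Q)=Q\prod_{p\mid Q}(1-1/p)$ for the number of admissible integers in each block. This is exactly the ``alternative route'' you sketch at the end. Your primary argument instead computes the count directly by M\"obius inclusion--exclusion, $\sum_{d\mid R}\mu(d)\lfloor n/d\rfloor$. The same key fact $R\mid n$ now serves to make every floor exact, rather than to make the blocks complete. Your version gains three things. First, it is self-contained: it derives the product formula instead of importing the value of $\varphi(Q)$, whose standard proof is essentially your computation with $n=R$. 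Second, the case $P_\ell(n)=\emptyset$ needs no separate treatment, since $R=1$ gives $\mu(1)\lfloor n/1\rfloor=n$. Third, your cross-check gives $|B_\ell(n)|=n\sum_{d^\ell\mid n}\mu(d)/d$ directly. That is, it proves the identity of Corollary~\ref{cor:mobius} by elementary counting, whereas the paper obtains that identity only later by comparing Dirichlet series coefficients. The paper's block decomposition, in turn, is more transparently combinatorial. It exhibits $B_\ell(n)$ as $n/Q$ translated copies of the reduced residues modulo $Q$, which explains the formula structurally rather than just computing it.
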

\begin{proof}
Let
\[
P_\ell(n) := \{ p \in \mathbb{P} \mid p^\ell \mid n \}.
\]
\textbf{Case 1: $P_\ell(n) = \emptyset$.} \\
Then by definition $\phi_\ell(n) = n$ (the empty product equals $1$). 
The condition defining $B_\ell(n)$ is vacuously true, so $B_\ell(n) = \{1,2,\dots,n\}$ and $|B_\ell(n)| = n$. 
Hence $\phi_\ell(n) = |B_\ell(n)|$.\\
\textbf{Case 2: $P_\ell(n) \neq \emptyset$.} \\
Write $P_\ell(n) = \{p_1,p_2,\dots,p_r\}$ with $p_1 < p_2 < \dots < p_r$.
Set
\[
Q := \prod_{i=1}^{r} p_i.
\]
Since each $p_i \in P_\ell(n)$ satisfies $p_i^k \mid n$, we have $p_i \mid n$, and because the primes are distinct, $Q \mid n$. 
Write $n = Q \cdot t$ for some positive integer $t$. Now, partition the set $\{1,2,\dots,n\}$ into $t$ consecutive blocks of length $Q$:
\[
\{1,\dots,Q\},\ \{Q+1,\dots,2Q\},\ \dots,\ \{(t-1)Q+1,\dots,tQ\}.
\]
Each block is a complete set of residues modulo $Q$. 
For an integer $m$, the condition ``$p_i \nmid m$ for all $i=1,\dots,r$'' is equivalent to $\gcd(m,Q)=1$, because the prime factors of $Q$ are exactly $p_1,\dots,p_r$.

In any complete set of residues modulo $Q$, the number of integers coprime to $Q$ is given by Euler's classical totient function
\[
\varphi(Q) = Q \prod_{p \mid Q} \left(1 - \frac{1}{p}\right).
\]
Since $Q$ is square‑free and its prime divisors are precisely $p_1,\dots,p_r$, we have
\[
\varphi(Q) = Q \prod_{i=1}^{r} \left(1 - \frac{1}{p_i}\right)
          = Q \prod_{p \in P_\ell(n)} \left(1 - \frac{1}{p}\right).
\]

Every block contains exactly $\varphi(Q)$ integers that are coprime to $Q$, i.e., satisfy the condition of $B_\ell(n)$. 
As there are $t$ disjoint blocks covering all numbers from $1$ to $n$, we obtain
\[
|B_\ell(n)| = t \cdot \varphi(Q)
         = \frac{n}{Q} \cdot Q \prod_{p \in P_\ell(n)} \left(1 - \frac{1}{p}\right)
         = n \prod_{p \in P_\ell(n)} \left(1 - \frac{1}{p}\right)
         = \phi_\ell(n).
\]
Thus, in both cases, $\phi_\ell(n) = |B_\ell(n)|$.
\end{proof}
\begin{definition}
For a fixed integer $\ell \geq 1$ and $n \in \mathbb{N}$, the Euler’s  $\ell$-totient $\phi_\ell(n)$ is defined by
\[
\phi_\ell(n) := n \prod_{\substack{p \in \mathbb{P} \\ p^{\ell} \mid n}} \left(1 - \frac{1}{p}\right),
\]
where $\mathbb{P}$ denotes the set of prime numbers.
\end{definition}

When $\ell=1$, $\mathcal{P}_1(n)=\{p: p\mid n\}$ and the identity reduces to the familiar Euler totient formula
\(
\varphi_1(n)=n\prod_{p\mid n}\big(1-\frac{1}{p}\big).
\)
For general $\ell$, only primes $p$ with exponent $\ge \ell$ in the factorization of $n$ contribute a $(1-1/p)$ factor.
\begin{lemma}
\begin{enumerate}
    \item The function $\phi_\ell$ is multiplicative.
    \item The value of  $\varphi_\ell$ at prime powers is calculated as follows:
\begin{equation}\label{123}
\varphi_\ell(p^a)=
\begin{cases} 
p^a, & 0\le a<\ell,\\[2mm]
p^a\Big(1-\dfrac{1}{p}\Big), & a\ge \ell.
\end{cases}
\end{equation}
\end{enumerate}
 
\end{lemma}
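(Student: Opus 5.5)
The plan is to prove part (2) first, straight from the definition, and then read off multiplicativity from the product formula.

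For (2), fix a prime $p$ and $a\ge 0$. The only prime that can divide $p^a$ is $p$, so the index set $\{q\in\mathbb{P} : q^\ell\mid p^a\}$ is either $\{p\}$ or empty. It equals $\{p\}$ precisely when $p^\ell\mid p^a$, that is, when $a\ge\ell$. Since $\ell\ge1$, the case $a=0$ falls under $a<\ell$, which is consistent with $\varphi_\ell(1)=1$ coming from the empty product. Substituting into the definition gives $p^a$ for $0\le a<\ell$ and $p^a(1-1/p)$ for $a\ge\ell$, which is exactly \eqref{123}.

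For (1), first note that $\phi_\ell(1)=1$. Then take coprime $m,n$. The key observation is that for a prime $p$ we have $v_p(mn)=v_p(m)+v_p(n)$, and at least one of the two summands is zero. Hence $p^\ell\mid mn$ if and only if $p^\ell\mid m$ or $p^\ell\mid n$, and these two alternatives are mutually exclusive. So the index set $P_\ell(mn)$ from the proof of Theorem \ref{t1} is the disjoint union $P_\ell(m)\sqcup P_\ell(n)$. The product over $P_\ell(mn)$ therefore factors as the product over $P_\ell(m)$ times the product over $P_\ell(n)$. Multiplying by $mn=m\cdot n$ gives $\phi_\ell(mn)=\phi_\ell(m)\phi_\ell(n)$.

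As an alternative route, one could also write $\phi_\ell(n)=\prod_{p^a\|n}\varphi_\ell(p^a)$ using part (2), which makes multiplicativity immediate.

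There is no real obstacle here. The only point that needs care is the disjointness of the two index sets, and this is exactly where coprimality is used. Without it, a prime with $v_p(m),v_p(n)<\ell$ but $v_p(m)+v_p(n)\ge\ell$ would break the identity; for example, $\ell=2$ and $m=n=p$ give $\phi_2(p^2)=p^2-p\neq p^2=\phi_2(p)^2$. So $\phi_\ell$ is multiplicative but not completely multiplicative.
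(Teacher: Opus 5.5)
Your proof is correct and follows essentially the same route as the paper: part (2) by deciding whether $p$ lies in the index set $\{q : q^\ell \mid p^a\}$, and part (1) by splitting the product over $P_\ell(mn)$ into the products over $P_\ell(m)$ and $P_\ell(n)$. Your argument via $v_p(mn)=v_p(m)+v_p(n)$, with one summand zero, also makes explicit the equality $P_\ell(mn)=P_\ell(m)\cup P_\ell(n)$; the paper uses this equality without stating it and only mentions the disjointness.
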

\begin{proof}
(1) Let  $\gcd(m,n)=1$. Since the condition $p^\ell \mid mn$ depends only on the prime factors of $m$ and $n$ separately, and because $\gcd(m,n)=1$, the sets $P_\ell(m)$ and $P_\ell(n)$ are disjoint. Then
\begin{eqnarray*}
    \phi_\ell(mn) &=& mn \prod_{p \in P_\ell(mn)} \left(1-\frac1p\right)\\
          &=& mn \left(\prod_{p \in P_\ell(m)} \left(1-\frac1p\right)\right)
                \left(\prod_{p \in P_\ell(n)} \left(1-\frac1p\right)\right)\\
          &=& \phi_\ell(m)\,\phi_\ell(n). 
\end{eqnarray*}
          
(2) For $n=p^a$ there are two cases. If $a<\ell$, then $\mathcal{P}_\ell(p^a)=\emptyset$, so the empty product equals $1$ and thus $\varphi_\ell(p^a)=p^a$. If $a\ge \ell$, then $\mathcal{P}_\ell(p^a)=\{p\}$. Hence, $\varphi_\ell(p^a)=p^a(1-1/p)$.          
\end{proof}

\section{Dirichlet series and meromorphic continuation of $\varphi_\ell$ }

\begin{theorem}[Euler product for $\phi_\ell$] \label{thm:euler_product}
For every integer $\ell \ge 1$ and for $\Re(s) > 2$, the Dirichlet series of $\phi_\ell(n)$ admits the Euler product
\[
\sum_{n=1}^{\infty} \frac{\phi_\ell(n)}{n^{s}} 
= \frac{\zeta(s-1)}{\zeta\!\bigl(\ell(s-1)+1\bigr)}.
\]
\end{theorem}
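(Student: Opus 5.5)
The plan is to combine the multiplicativity of $\phi_\ell$ with the prime-power formula \eqref{123}. First I will justify absolute convergence. Since $0<\phi_\ell(n)\le n$, we have $|\phi_\ell(n)n^{-s}|\le n^{1-\Re(s)}$, so the Dirichlet series converges absolutely for $\Re(s)>2$. Because $\phi_\ell$ is multiplicative, the standard Euler product theorem then gives
\[
\sum_{n=1}^\infty \frac{\phi_\ell(n)}{n^s}=\prod_p \sum_{a\ge 0}\frac{\phi_\ell(p^a)}{p^{as}},
\]
and this product converges absolutely in the same half-plane.

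Next I will compute each local factor. Write $x=p^{1-s}$, so that $|x|<1$. By \eqref{123}, the factor is
\[
\sum_{a=0}^{\ell-1}x^a+\Bigl(1-\frac1p\Bigr)\sum_{a\ge \ell}x^a
=\frac{1}{1-x}-\frac1p\cdot\frac{x^\ell}{1-x}
=\frac{1-p^{-1}x^{\ell}}{1-x}.
\]
Here $p^{-1}x^\ell=p^{-1-\ell(s-1)}=p^{-(\ell(s-1)+1)}$. The local factor is therefore $(1-p^{-(\ell(s-1)+1)})/(1-p^{-(s-1)})$.

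Finally I will take the product over all primes. For $\Re(s)>2$ we have $\Re(s-1)>1$, and also $\Re(\ell(s-1)+1)>\ell+1\ge 2>1$. So both $\prod_p(1-p^{-(s-1)})^{-1}=\zeta(s-1)$ and $\prod_p(1-p^{-(\ell(s-1)+1)})=1/\zeta(\ell(s-1)+1)$ converge absolutely. Splitting the product of quotients into the product of these two absolutely convergent products gives $\zeta(s-1)/\zeta(\ell(s-1)+1)$.

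There is no serious obstacle here. The only point that needs care is justifying the rearrangement, which means separating the Euler product into two products; absolute convergence of each factor covers this. As a cross-check, I would note that the result matches the convolution $\phi_\ell(n)=n\sum_{d^\ell\mid n}\mu(d)/d$ mentioned in the introduction. Indeed, $\sum_{d}\mu(d)d^{-1}d^{-\ell(s-1)}=1/\zeta(\ell(s-1)+1)$.
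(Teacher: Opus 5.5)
Your proposal is correct and follows essentially the same route as the paper. Multiplicativity plus absolute convergence gives the Euler product, the local factor sums as geometric series to $(1-p^{-(\ell(s-1)+1)})/(1-p^{-(s-1)})$, and the product over primes splits into $\zeta(s-1)/\zeta(\ell(s-1)+1)$; your explicit bound $0<\phi_\ell(n)\le n$ and your one-line summation of the local factor are slightly cleaner than the paper's write-up.
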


\begin{proof}
Using formula \ref{123},
\begin{align*}
\sum_{a=0}^{\infty} \frac{\phi_\ell(p^a)}{p^{a s}}
&= \sum_{a=0}^{\ell-1} \frac{p^{a}}{p^{a s}} 
   + \sum_{a=\ell}^{\infty} \frac{p^{a}\left(1 - \frac{1}{p}\right)}{p^{a s}} \\
&= \sum_{a=0}^{\ell-1} p^{a(1-s)} 
   + \left(1 - \frac{1}{p}\right) \sum_{a=\ell}^{\infty} p^{a(1-s)} .
\end{align*}
Both sums are geometric series. The first sum equals
\[
\frac{1 - p^{\ell(1-s)}}{1 - p^{1-s}}.
\]
 The second sum equals, provided $|p^{1-s}|<1$ (which holds for $\Re(s)>2$),
\[
\sum_{a=\ell}^{\infty} p^{a(1-s)} = p^{\ell(1-s)} \sum_{k=0}^{\infty} p^{k(1-s)}
= \frac{p^{\ell(1-s)}}{1 - p^{1-s}} .
\]
Hence, 
\begin{equation*}
  \sum_{a=0}^{\infty} \frac{\phi_\ell(p^a)}{p^{a s}}=  \frac{1 - p^{\ell(1-s)-1}}{1 - p^{1-s}}.
\end{equation*}
Because $\phi_\ell$ is multiplicative and absolutely convergent in $\Re(s) > 2 $, its Dirichlet series factors into an Euler product:
\[
\sum_{n=1}^{\infty} \frac{\phi_\ell(n)}{n^{s}}
= \prod_{p \in \mathbb{P}} \left( \sum_{a=0}^{\infty} \frac{\phi_\ell(p^a)}{p^{a s}} \right),
\qquad \Re(s) > 2 .
\]
Each Euler factor equals
\[
\frac{1 - p^{-[\ell(s-1)+1]}}{1 - p^{-(s-1)}} .
\]
Therefore,
\[
\sum_{n=1}^{\infty} \frac{\phi_\ell(n)}{n^{s}}
= \prod_{p} \frac{1 - p^{-[\ell(s-1)+1]}}{1 - p^{-(s-1)}} .
\]
Recall the Euler product for the Riemann zeta function (valid for $\Re(z)>1$)
\[
\zeta(z) = \prod_{p} \frac{1}{1 - p^{-z}} .
\]
For $\Re(s) > 2$, we have
\[
\prod_{p} \bigl(1 - p^{-(s-1)}\bigr) = \frac{1}{\zeta(s-1)}, \qquad
\prod_{p} \bigl(1 - p^{-[\ell(s-1)+1]}\bigr) = \frac{1}{\zeta\!\bigl(\ell(s-1)+1\bigr)} .
\]
Consequently,
\[
\sum_{n=1}^{\infty} \frac{\phi_\ell(n)}{n^{s}}
= \frac{\displaystyle \prod_{p} \bigl(1 - p^{-[\ell(s-1)+1]}\bigr)}
        {\displaystyle \prod_{p} \bigl(1 - p^{-(s-1)}\bigr)}
= \frac{\zeta(s-1)}{\zeta\!\bigl(\ell(s-1)+1\bigr)} .
\]
\end{proof}
Setting $\ell = 1$ in Theorem \ref{thm:euler_product},  we obtain the well‑known Dirichlet series of Euler’s  totient function, see \citep[p.~229]{apostol1976}.

\begin{corollary}[Dirichlet series of $\phi$] \label{cor:classical}
For $\ell = 1$,
\[
\sum_{n=1}^{\infty} \frac{\phi(n)}{n^{s}} = \frac{\zeta(s-1)}{\zeta(s)}, \qquad \Re(s) > 2 .
\]
\end{corollary}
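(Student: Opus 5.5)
This is a direct specialization of Theorem \ref{thm:euler_product}. The plan is to put $\ell=1$ there and check two things: that $\phi_1$ coincides with the classical totient $\phi$, and that the right-hand side reduces to $\zeta(s-1)/\zeta(s)$.

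\textbf{Identifying $\phi_1$ with $\phi$.} By the definition of $\phi_\ell$, the condition $p^1\mid n$ is the same as $p\mid n$. Hence
\[
\phi_1(n)=n\prod_{p\mid n}\Bigl(1-\frac1p\Bigr),
\]
which is the classical product formula for Euler's function. Alternatively, Theorem \ref{t1} shows that $B_1(n)$ is exactly the set of $m\le n$ with $\gcd(m,n)=1$, so $\phi_1(n)=|B_1(n)|=\phi(n)$.

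\textbf{Specializing the Euler product.} With $\ell=1$, the argument of the denominator zeta in Theorem \ref{thm:euler_product} is $\ell(s-1)+1=s$. So for $\Re(s)>2$ the theorem gives
\[
\sum_{n=1}^{\infty}\frac{\phi(n)}{n^{s}}=\frac{\zeta(s-1)}{\zeta(s)}.
\]
The region of validity needs no new argument; it is the region of Theorem \ref{thm:euler_product}. In that region $\Re(s-1)>1$ and $\Re(s)>1$, so both Euler products used there converge absolutely. Also $\zeta(s)\neq0$, so the quotient is well defined.

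As an optional sanity check, the local factor $\frac{1-p^{-s}}{1-p^{1-s}}$ matches the direct computation $1+\sum_{a\ge1}(p^a-p^{a-1})p^{-as}$.

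I do not expect any real obstacle here. The only point worth stating explicitly is the identification $\phi_1=\phi$, since the corollary is phrased in terms of the classical $\phi$.
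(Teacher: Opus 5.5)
Your proposal is correct and is the same argument as the paper, which simply sets $\ell=1$ in Theorem~\ref{thm:euler_product} (so that $\ell(s-1)+1=s$) and cites the classical result. Your explicit check that $\phi_1$ is the classical totient, via $p^1\mid n \iff p\mid n$ or via $B_1(n)$, covers a step the paper handles only in its remark after the definition.
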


\begin{corollary}[Meromorphic continuation] \label{cor:continuation}
The function
\[
F_\ell(s) := \sum_{n=1}^{\infty} \frac{\phi_\ell(n)}{n^{s}}
\]
admits a meromorphic continuation to the whole complex plane. Its poles are located at:
\begin{enumerate}
    \item $s = 2$ (simple pole with residue $1/\zeta(\ell+1)$);
    \item $s = 1 + \frac{\rho-1}{\ell}$ for every non‑trivial zero $\rho$ of $\zeta$ , these poles are simple.
    \item $s = 1 - \frac{2n+1}{\ell}$ for $n \in \mathbb{N}$, these poles are simple.
\end{enumerate}
\end{corollary}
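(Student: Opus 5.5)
The plan is to read everything off the identity $F_\ell(s)=\zeta(s-1)/\zeta(\ell(s-1)+1)$ of Theorem \ref{thm:euler_product}, valid for $\Re(s)>2$. Since $\zeta$ is meromorphic on $\mathbb{C}$ with only a simple pole at $1$, both $s\mapsto \zeta(s-1)$ and $s\mapsto \zeta(\ell(s-1)+1)$ are meromorphic on $\mathbb{C}$. The quotient is therefore meromorphic on $\mathbb{C}$ (the denominator is not identically zero), and by uniqueness of analytic continuation it continues $F_\ell$. The rest of the proof locates the poles of this quotient. These can only come from poles of the numerator or from zeros of the denominator, so I then check which of these candidates are cancelled.

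\textbf{Step 1 (numerator).} The numerator $\zeta(s-1)$ has a single pole, at $s=2$, and it is simple with residue $1$. At $s=2$ the denominator equals $\zeta(\ell+1)$, which is finite and nonzero because $\ell+1\ge 2$ lies in the region of absolute convergence of the Euler product. Hence $s=2$ is a simple pole of $F_\ell$ with residue $1/\zeta(\ell+1)$.

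\textbf{Step 2 (zeros of the denominator).} The zeros of $\zeta(w)$ are the trivial zeros $w=-2n$ with $n\ge1$, together with the nontrivial zeros $\rho=\beta+i\gamma$, where $0<\beta<1$. Solving $\ell(s-1)+1=w$ gives $s=1+(w-1)/\ell$. This yields the candidates $s=1+(\rho-1)/\ell$ and $s=1-(2n+1)/\ell$. One should also observe that at $s=1$ the denominator has a pole while $\zeta(0)=-1/2\neq0$, so $F_\ell$ has a zero at $s=1$, not a pole.

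\textbf{Step 3 (no cancellation).} For each candidate I check that the numerator $\zeta(s-1)$ does not vanish there.
\begin{enumerate}
\item At $s=1+(\rho-1)/\ell$ the argument of the numerator is $(\rho-1)/\ell$. Its real part $(\beta-1)/\ell$ is negative, so it is not a nontrivial zero. Its imaginary part $\gamma/\ell$ is nonzero, since $\zeta$ has no real zeros in $(0,1)$, so it is not a trivial zero either.
\item At $s=1-(2n+1)/\ell$ the argument is $-(2n+1)/\ell$. This equals a trivial zero $-2m$ only if $2m\ell=2n+1$, which is impossible by parity. It also lies in no region containing nontrivial zeros, because it is real.
\end{enumerate}
So every candidate is a genuine pole, and its order equals the multiplicity of the corresponding zero of $\zeta$. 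The trivial zeros are simple, since $\zeta(-2n)=0$ comes from a simple zero of $\sin(\pi w/2)$ in the functional equation while the remaining factors are nonzero there. Hence the poles in item 3 are simple.

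\textbf{Main obstacle.} The real difficulty is the claim in item 2 that the poles at $1+(\rho-1)/\ell$ are simple. The argument above gives only that the pole has order equal to the multiplicity of $\rho$, and simplicity of all zeta zeros is an open conjecture. I would therefore state and prove item 2 in the form ``a pole of order $m_\rho$, which is simple whenever $\rho$ is a simple zero.'' Equivalently, item 2 holds as stated under the simple-zeros hypothesis. This weaker form suffices for the later asymptotic analysis, which only uses the real parts $1+(\beta-1)/\ell$ of these poles.
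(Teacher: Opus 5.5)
Your proposal follows the same route as the paper: continue $F_\ell$ as $\zeta(s-1)/\zeta(\ell(s-1)+1)$, use the simple pole of the numerator at $s=2$, and check that the numerator does not vanish at the points $1+(w-1)/\ell$ coming from zeros $w$ of $\zeta$. Your real-part/imaginary-part test is equivalent to the paper's case analysis of $\rho=1+\ell\rho'$, and you also justify $\zeta(\ell+1)\neq 0$ and the simplicity of the trivial zeros, which the paper only asserts.

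Your caveat on item 2 is correct, and it exposes a gap in the paper's own proof, which passes directly from ``no cancellation'' to ``genuine simple pole''. Non-cancellation only gives a pole of order equal to the multiplicity of $\rho$, so simplicity of all these poles is equivalent to the open simple-zeros conjecture. Your weaker form is what can actually be proved, and it is all that Theorem \ref{main}(2) uses.
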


\begin{proof}
Since $\zeta(z)$ is meromorphic on $\mathbb{C}$ with a simple pole at $z=1$ and zeros at the non‑trivial zeros $\rho$ (and trivial zeros at $z=-2n$), the quotient
\[
F_\ell(s) = \frac{\zeta(s-1)}{\zeta\!\bigl(\ell(s-1)+1\bigr)}
\]
is meromorphic everywhere.
\begin{enumerate}
    \item The numerator $\zeta(s-1)$ has a simple pole at $s-1 = 1$, i.e., at $s=2$. The denominator $\zeta(\ell(s-1)+1)$ is regular at $s=2$ (since $\ell(s-1)+1 = \ell+1 > 1$). Hence $s=2$ is a simple pole of $F_\ell(s)$. The residue is
    \[
    \text{Res}_{s=2} F_\ell(s) = \frac{1}{\zeta(\ell+1)} .
    \]
    \item The denominator vanishes when $\ell(s-1)+1 = \rho$, i.e., $s = 1 + \frac{\rho-1}{\ell}$, for any zero $\rho$ of $\zeta$. If $\rho$ is a non‑trivial zero, these points are poles of $F_\ell(s)$ unless they are cancelled by a zero of $\zeta(s-1)$. Such cancellation would require $s-1 = \rho'$ for some zero $\rho'$ of $\zeta$, i.e., $1 + \frac{\rho-1}{\ell} - 1 = \rho'$, or 
\begin{equation} \label{rho}
        \rho = 1 + \ell \rho'.
\end{equation}
     Consider two subcases.
\begin{itemize}
  \item $\rho'$ non‑trivial: then $\Re(\rho')>0$, so $\Re(\rho)=1+\ell\Re(\rho')>1$, contradicting the classical zero‑free region $\zeta(s)\neq0$ for $\Re(s)\ge1$ (apart from the pole at $s=1$).
  \item $\rho'$ trivial: $\rho'=-2m$ ($m\ge1$). Then $\rho = 1-2m\ell$ is real and $\le -1$. Such a number is not a non‑trivial zero (its real part is not in $(0,1)$) and is never a trivial zero because $1-2m\ell$ is odd when $\ell$ is odd and even but not of the form $-2m'$ when $\ell$ is even; in all cases it fails to be an even negative integer.
\end{itemize}
Thus \ref{rho} cannot hold for any zero $\rho$. Consequently every zero of $\zeta$ gives a genuine simple pole of $F_\ell(s)$ at $s=s_\rho$.
    
    \item For the trivial zeros $\rho = -2n$ ($n \ge 1$), we obtain poles at $s = 1 - \frac{2n+1}{\ell}$. A pole at $s_\rho$ is cancelled (i.e. becomes removable or a pole of lower order) if and only if the numerator also vanishes at that point:
\[
\zeta(s_\rho-1) = \zeta\!\left(\frac{-2n-1}{\ell}\right) = 0.
\]
Since $\frac{-2n-1}{\ell}$ is real, cancellation would require $\frac{-2n-1}{\ell} = -2m$ for some integer $m \ge 1$, i.e. $2n+1 = 2m\ell.$
 This is impossible for any integers $n,m,\ell \ge 1$.  Hence  $\zeta\!\left(\frac{-2n-1}{\ell}\right) \neq 0$.  
Thus the numerator is non‑zero while the denominator vanishes (because $\rho = -2n$ is a zero of $\zeta$). Consequently $F_\ell(s)$ has a pole at $s = s_\rho$. Moreover, all trivial zeros of $\zeta$ are simple; since the numerator is analytic and non‑zero at $s_\rho$, the pole of $F_\ell(s)$ is also simple.
\end{enumerate}
\end{proof}

\begin{corollary}[Poles under RH] \label{cor:RH}
Assume the Riemann hypothesis. Then all non‑trivial poles of $F_\ell(s)$ lie on the vertical line
\[
\Re(s) = 1 - \frac{1}{2\ell}.
\]
\end{corollary}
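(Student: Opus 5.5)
We read off the pole locations from Corollary~\ref{cor:continuation} and substitute $\Re(\rho)=\tfrac12$. By that corollary, the poles of $F_\ell(s)=\zeta(s-1)/\zeta(\ell(s-1)+1)$ are of three kinds:
\begin{itemize}
\item the pole at $s=2$, coming from the numerator;
\item the poles $s=1-\frac{2n+1}{\ell}$, coming from the trivial zeros $-2n$ of $\zeta$;
\item the poles $s_\rho=1+\frac{\rho-1}{\ell}$, one for each non-trivial zero $\rho$.
\end{itemize}
The ``non-trivial poles'' in the statement are the third kind, so the claim is a statement about $\Re(s_\rho)$ alone.

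\textbf{Computation.} Write $\rho=\beta+i\gamma$. Then
\[
s_\rho = 1+\frac{\beta-1}{\ell}+\frac{i\gamma}{\ell}, \qquad \Re(s_\rho)=1+\frac{\beta-1}{\ell}.
\]
Under RH we have $\beta=\tfrac12$, which gives $\Re(s_\rho)=1-\frac{1}{2\ell}$, as claimed. The map $s\mapsto \ell(s-1)+1$ is a real affine bijection of $\mathbb{C}$, so $\zeta(\ell(s-1)+1)=0$ exactly when $\ell(s-1)+1$ is a zero of $\zeta$. Hence no other non-trivial poles arise: every zero of the denominator with $0<\Re(\ell(s-1)+1)<1$ is of the form $s_\rho$.

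\textbf{Excluding spurious poles.} We also check that no pole is lost or gained off this line.
\begin{itemize}
\item The numerator $\zeta(s-1)$ has its only pole at $s=2$, which is not on the line.
\item Zeros of the numerator could only remove poles, never create new ones.
\item The trivial poles $1-\frac{2n+1}{\ell}$ are real and $\le 1-\frac{3}{\ell}$. They are excluded by the definition of ``non-trivial'', and in any case they do not lie on the line $\Re(s)=1-\frac{1}{2\ell}$, since $2n+1\neq \tfrac12$.
\end{itemize}

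The only point needing care is this bookkeeping: making precise that ``non-trivial poles'' means the poles $s_\rho$ indexed by non-trivial zeros, so that the trivial family is excluded. Once that is fixed, the result is a one-line substitution.
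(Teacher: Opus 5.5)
Your proposal is correct and is essentially the paper's argument: the paper substitutes $\Re(\rho)=\tfrac12$ into $\Re\bigl(1+\frac{\rho-1}{\ell}\bigr)$ to obtain $1-\frac{1}{2\ell}$. Your extra bookkeeping is accurate and only makes explicit what the paper leaves implicit, namely that $s=2$ and the trivial family $1-\frac{2n+1}{\ell}$ lie off this line and that every non-trivial pole has the form $s_\rho$.
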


\begin{proof}
Under RH, every non‑trivial zero $\rho$ of $\zeta$ satisfies $\Re(\rho) = \frac12$. Hence for such a zero,
\[
\Re\!\left(1 + \frac{\rho-1}{\ell}\right) = 1 + \frac{\frac12 - 1}{\ell} = 1 - \frac{1}{2\ell}.
\]
\end{proof}

\begin{corollary}[M\"obius inversion formula] \label{cor:mobius}
For every $n \in \mathbb{N}$,
\[
\phi_\ell(n) = n \sum_{d^{\,\ell} \mid n} \frac{\mu(d)}{d}.
\]
\end{corollary}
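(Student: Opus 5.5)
The identity follows from multiplicativity, and I would prove it one prime at a time. Define
\[
g(n) := n \sum_{d^{\ell} \mid n} \frac{\mu(d)}{d}.
\]
Then $g(n) = n \cdot h(n)$, where $h(n) := \sum_{d^\ell \mid n} \mu(d)/d$.

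First I show that $h$ is multiplicative. Let $\gcd(m,n)=1$. Every $d$ with $d^\ell \mid mn$ factors uniquely as $d = d_1 d_2$ with $d_1^\ell \mid m$ and $d_2^\ell \mid n$; take $d_1 = \gcd(d,m)$, so each prime of $d$ goes wherever it divides $mn$. Since $\gcd(d_1,d_2)=1$, we get $\mu(d)/d = (\mu(d_1)/d_1)(\mu(d_2)/d_2)$, and the sum splits as $h(mn) = h(m)h(n)$. Hence $g$ is multiplicative. By the multiplicativity lemma above, $\phi_\ell$ is multiplicative too, so it suffices to check $g(p^a) = \phi_\ell(p^a)$ for prime powers.

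For $n = p^a$, only $d = 1$ and $d = p$ can contribute, because $\mu(d)=0$ for higher powers of $p$. Moreover $d = p$ occurs exactly when $p^\ell \mid p^a$, i.e. when $a \ge \ell$.
\begin{itemize}
\item If $a < \ell$, then $h(p^a) = 1$, so $g(p^a) = p^a$.
\item If $a \ge \ell$, then $h(p^a) = 1 - 1/p$, so $g(p^a) = p^a(1 - 1/p)$.
\end{itemize}
This matches formula \eqref{123} in both cases, and the identity follows for all $n$. The case $n=1$ is trivial, since both sides equal $1$.

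As an alternative route, one can compare Dirichlet series. The series of $n \mapsto \sum_{d^\ell \mid n} \mu(d)/d$ is $\zeta(s) \sum_d \mu(d) d^{-1-\ell s} = \zeta(s)/\zeta(\ell s + 1)$. Replacing $s$ by $s-1$ to account for the factor $n$ gives $\zeta(s-1)/\zeta(\ell(s-1)+1)$. By Theorem~\ref{thm:euler_product} this is the series of $\phi_\ell$, so uniqueness of Dirichlet coefficients yields the result.

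No step is a real obstacle. The only point needing care is the multiplicativity of the restricted divisor sum over $d^\ell \mid n$, which comes down to the unique splitting $d = d_1 d_2$ described above.
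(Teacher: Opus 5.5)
Your proof is correct, but your primary route differs from the paper's. The paper derives the identity as a corollary of Theorem~\ref{thm:euler_product}. It writes $F_\ell(s)=\zeta(s-1)\sum_{d}\mu(d)d^{-\ell(s-1)-1}$, multiplies out with $n=md^{\ell}$, and invokes uniqueness of Dirichlet coefficients, which is essentially your ``alternative route''. Your main proof instead shows directly that $g(n)=n\sum_{d^\ell\mid n}\mu(d)/d$ is multiplicative and agrees with \eqref{123} on prime powers.

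Your argument is purely arithmetic and self-contained. It needs no half-plane of convergence and no uniqueness theorem. It also avoids the detour in which the Euler product is first assembled from multiplicativity and \eqref{123} and then expanded back out. The paper's route is shorter once the Euler product is available, and it shows that the formula is just the factorization of $F_\ell$ read off coefficient by coefficient.

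Your Dirichlet-series version is also cleaner than the paper's write-up. With $n=md^\ell$ one has $m^{-(s-1)}d^{-\ell(s-1)-1}=(n/d)\,n^{-s}$, not $n^{-s}$. So the coefficient of $n^{-s}$ is directly $n\sum_{d^\ell\mid n}\mu(d)/d$. The paper's intermediate coefficient $\sum_{d^\ell\mid n}\mu(d)$, multiplied by $n$, is not the stated formula: for $\ell=1$, $n=2$ it gives $0\neq\phi(2)$. Computing the series $\zeta(s)/\zeta(\ell s+1)$ of $h$ first and then shifting $s\mapsto s-1$, as you do, avoids this slip.

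Even multiplicativity can be bypassed. Only squarefree $d$ contribute, and a squarefree $d$ satisfies $d^\ell\mid n$ exactly when $d\mid Q:=\prod_{p^\ell\mid n}p$. Hence the sum equals $\sum_{d\mid Q}\mu(d)/d=\prod_{p\mid Q}(1-1/p)$.
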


\begin{proof}
Consider the Dirichlet series identity from Theorem \ref{thm:euler_product},
\[
\sum_{n=1}^{\infty} \frac{\phi_\ell(n)}{n^{s}} = \frac{\zeta(s-1)}{\zeta\!\bigl(\ell(s-1)+1\bigr)}.
\]
Since $1/\zeta(z) = \sum_{d=1}^{\infty} \mu(d) d^{-z}$, we have
\[
\frac{1}{\zeta\!\bigl(\ell(s-1)+1\bigr)} = \sum_{d=1}^{\infty} \frac{\mu(d)}{d^{\,\ell(s-1)+1}}.
\]
Therefore,
\[
\sum_{n=1}^{\infty} \frac{\phi_\ell(n)}{n^{s}} 
= \zeta(s-1) \sum_{d=1}^{\infty} \frac{\mu(d)}{d^{\,\ell(s-1)+1}}
= \sum_{m=1}^{\infty} \frac{1}{m^{s-1}} \sum_{d=1}^{\infty} \frac{\mu(d)}{d^{\,\ell(s-1)+1}}.
\]
Set $n = m d^{\ell}$. Then $n^{-s} = m^{-(s-1)} d^{-\ell(s-1)-1}$, and the coefficient of $n^{-s}$ is
\[
\sum_{\substack{m,d \ge 1 \\ m d^{\ell} = n}} \mu(d) = \sum_{d^{\ell} \mid n} \mu(d).
\]
Since the Dirichlet series coefficients are unique, see \citep[p.~227]{apostol1976}, multiplying by $n$ gives the desired formula.
\end{proof}

\section{Summatory function and the Riemann hypothesis }
\begin{theorem}\label{thm:main}
Fix an integer $\ell\ge1$. Then: For all $x\ge1$,
\begin{equation} \label{sl}
\Phi_\ell(x)=\frac{x^2}{2\zeta(\ell+1)} - \frac{x^2}{2}\,T(x^{1/\ell}) + O_{\ell}(x\log x),    
\end{equation}
where , for $y\ge1$, $T(y):=\sum_{d>y}\frac{\mu(d)}{d^{\ell+1}}.$
\end{theorem}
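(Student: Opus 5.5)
We start from Corollary \ref{cor:mobius}, $\phi_\ell(n)=n\sum_{d^\ell\mid n}\mu(d)/d$, and swap the order of summation. Writing $n=md^\ell$, we get
\[
\Phi_\ell(x)=\sum_{n\le x} n\sum_{d^\ell\mid n}\frac{\mu(d)}{d}
=\sum_{d\le x^{1/\ell}}\mu(d)\,d^{\ell-1}\sum_{m\le x/d^\ell} m .
\]
The inner sum is elementary:
\[
\sum_{m\le y} m=\frac{y^2}{2}+O(y+1).
\]
Take $y=x/d^\ell$. The main part of each term is then $\mu(d)d^{\ell-1}\cdot\frac{x^2}{2d^{2\ell}}=\frac{x^2}{2}\frac{\mu(d)}{d^{\ell+1}}$.

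Summing over $d\le x^{1/\ell}$ gives
\[
\frac{x^2}{2}\sum_{d\le x^{1/\ell}}\frac{\mu(d)}{d^{\ell+1}}
=\frac{x^2}{2}\Big(\frac{1}{\zeta(\ell+1)}-T(x^{1/\ell})\Big).
\]
Here we use $\sum_{d\ge1}\mu(d)d^{-(\ell+1)}=1/\zeta(\ell+1)$, which converges absolutely since $\ell+1\ge2$. This produces exactly the first two terms of \eqref{sl}. Note that we keep the tail $T$ explicitly rather than estimating it, because it is part of the statement.

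It remains to bound the error contributed by the $O(y+1)$ terms, which is
\[
\sum_{d\le x^{1/\ell}} d^{\ell-1}\Big(\frac{x}{d^\ell}+1\Big)
= x\sum_{d\le x^{1/\ell}}\frac1d+\sum_{d\le x^{1/\ell}}d^{\ell-1}.
\]
The first piece is $x\bigl(\tfrac1\ell\log x+O(1)\bigr)=O_\ell(x\log x)$ for $x\ge2$. The second piece is at most $x^{1/\ell}\cdot x^{(\ell-1)/\ell}=x$.

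So the total error is $O_\ell(x\log x)$, with the implied constant uniform for $x\ge2$. For $1\le x<2$, every quantity in \eqref{sl} is bounded, so the statement holds after enlarging the constant; one has to remember that $x\log x$ vanishes at $x=1$, which is exactly why enlarging the constant is needed there.

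The only delicate point is bookkeeping. We must ensure that the error $O(y+1)$ is not replaced by the cruder $O(y)$ when $y<1$; this cannot happen, since $d\le x^{1/\ell}$ forces $y=x/d^\ell\ge1$. We must also ensure that the harmonic sum produces only a single logarithm. Beyond that, the argument uses no analytic input, so there is no genuine obstacle. In particular, the sign convention for $T$ must match the statement, giving $-\tfrac{x^2}{2}T(x^{1/\ell})$, which it does.
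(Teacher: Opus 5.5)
Your argument is the paper's proof. It uses the same expansion via Corollary~\ref{cor:mobius}, the substitution $n=md^\ell$, the estimate $\sum_{m\le y}m=y^2/2+O(y+1)$, the two error sums $x\sum_{d\le x^{1/\ell}}1/d$ and $\sum_{d\le x^{1/\ell}}d^{\ell-1}$, and the completion of $\sum_{d\le x^{1/\ell}}\mu(d)d^{-\ell-1}$ to $1/\zeta(\ell+1)-T(x^{1/\ell})$. For $x\ge2$ it is correct; your bound $\sum_{d\le x^{1/\ell}}d^{\ell-1}\le x$ by counting terms is slightly cleaner than the paper's integral comparison.

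The one claim that is false is your patch for $1\le x<2$. On that range $x^{1/\ell}<2$, so only $d=1$ contributes to $\sum_{d\le x^{1/\ell}}\mu(d)d^{-\ell-1}$, while $\Phi_\ell(x)=\phi_\ell(1)=1$. Hence
\[
\Phi_\ell(x)-\frac{x^2}{2\zeta(\ell+1)}+\frac{x^2}{2}\,T(x^{1/\ell})=1-\frac{x^2}{2}\qquad(1\le x<2).
\]
This tends to $1/2$ as $x\to1^+$, whereas $x\log x\to0$, so no choice of constant works. Boundedness of the quantities is not enough, and the vanishing of $x\log x$ at $x=1$ is exactly why the patch fails, not why it is needed.

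So the statement as written, for all $x\ge1$, is false on a right neighbourhood of $1$. The paper's proof has the same blind spot: it silently absorbs the $x\cdot O(1)$ from the harmonic sum into $O_\ell(x\log x)$. You should state and prove the result for $x\ge2$, or with error term $O_\ell\bigl(x(1+\log x)\bigr)$, instead of claiming the range $1\le x<2$ is covered.
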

\begin{proof}
Start from the definition and rearrange so that
\[
\Phi_\ell(x)=\sum_{n\le x} n\sum_{d^\ell\mid n}\frac{\mu(d)}{d}.
\]
 Interchange sums to get
\[
\Phi_\ell(x)=\sum_{d\le x^{1/\ell}}\frac{\mu(d)}{d}\sum_{\substack{n\le x\\d^\ell\mid n}} n.
\]
For fixed $d$, set $n = d^\ell m$. Then $m$ runs from $1$ to $\left\lfloor x/d^\ell \right\rfloor =: N$. Thus
\[
\sum_{\substack{n \leq x \\ d^\ell \mid n}} n = \sum_{m=1}^N d^\ell m = d^\ell \frac{N(N+1)}{2} = \frac{d^\ell N^2}{2} + \frac{d^\ell N}{2}.
\]

Write $N = x/d^\ell - \theta$ with $0 \leq \theta < 1$. Then
\[
\frac{d^\ell N^2}{2} = \frac{x^2}{2 d^\ell} - x \theta + \frac{d^\ell \theta^2}{2}, \quad \frac{d^\ell N}{2} = \frac{x}{2} - \frac{d^\ell \theta}{2}.
\]

Adding gives
\[
\frac{x^2}{2 d^\ell} + \frac{x}{2} - x \theta - \frac{d^\ell \theta}{2} + \frac{d^\ell \theta^2}{2}.
\]

The terms involving $\theta$ are bounded by $x + d^\ell$ (since $|\theta| \leq 1$). Hence
\[
\sum_{\substack{n \leq x \\ d^\ell \mid n}} n = \frac{x^2}{2 d^\ell} + O(x + d^\ell).
\]

Multiply by $\mu(d)/d$ and sum over $d \leq x^{1/\ell}$:
\[
\Phi_\ell(x) = \sum_{d \leq x^{1/\ell}} \frac{\mu(d)}{d} \left( \frac{x^2}{2 d^\ell} + O(x + d^\ell) \right) = \frac{x^2}{2} \sum_{d \leq x^{1/\ell}} \frac{\mu(d)}{d^{\ell+1}} + \sum_{d \leq x^{1/\ell}} O\left(\frac{x}{d}\right) + \sum_{d \leq x^{1/\ell}} O(d^{\ell - 1}).
\]

For $X = x^{1/\ell}$,
\[
\sum_{d \leq X} \frac{1}{d} = \log X + \gamma + O(1/X) = \frac{1}{\ell} \log x + O(1),
\]
see \citep[p.~6]{Tenenbaum1995}. Therefore the first error term is $O_{\ell}(x \log x)$.

The function $f(t) = t^{\ell - 1}$ is increasing for $\ell \geq 1$, thus the second error term is of order
\[
\sum_{d \leq x^{1/\ell}} d^{\ell - 1} \leq \int_0^{x^{1/\ell}} t^{\ell - 1} dt + (x^{1/\ell})^{\ell - 1} = \frac{x}{\ell} + x^{1 - 1/\ell} = O_{\ell}(x).
\]

Hence,
\[
\Phi_\ell(x) = \frac{x^2}{2} \sum_{d \leq x^{1/\ell}} \frac{\mu(d)}{d^{\ell+1}} + O_{\ell}\left( x \log x \right).
\]

Since $\ell \ge 1$, the Dirichlet series for $1/\zeta$ converges absolutely and 
\[
\frac{1}{\zeta(\ell + 1)} = \sum_{d=1}^\infty \frac{\mu(d)}{d^{\ell + 1}},
\]
see \citep[p.~3]{Titchmarsh1986}.
Thus
\[
\sum_{d \leq x^{1/\ell}} \frac{\mu(d)}{d^{\ell + 1}} = \frac{1}{\zeta(\ell + 1)} - \sum_{d > x^{1/\ell}} \frac{\mu(d)}{d^{\ell + 1}} = \frac{1}{\zeta(\ell + 1)} - T(x^{1/\ell}),
\]
where
\[
T(y) = \sum_{d > y} \frac{\mu(d)}{d^{\ell + 1}}.
\]

Substituting into the expression for $\Phi_\ell(x)$ yields
\[
\Phi_\ell(x) = \frac{x^2}{2 \zeta(\ell + 1)} - \frac{x^2}{2} T(x^{1/\ell}) + O_{\ell}(x \log x).
\]

\end{proof}
\begin{remark}
(1) At $\ell=1$, Equation \ref{sl} reduces to  the classical result
\begin{equation*}
  \sum_{n\le x}\varphi(n)=\frac{x^2}{2\zeta(\ell+1)} + O(x\log x),   
\end{equation*}
see  \textnormal{\citep[p.~39]{Tenenbaum1995}}. This is because $\frac{x^2}{2}\,T(x)=O(x).$ \\
(2) The best known unconditional error term is due to Walfisz     
$$\sum_{n\le x}\varphi(n)= \frac{x^2}{2\zeta(2)}+O( x(\log x)^{\frac{3}{2}} (\log_{2} x)^{\frac{4}{3}}),$$
see  \textnormal{\citep[p.~144]{Walfisz1963}}.
\end{remark}
\begin{theorem}\label{main}
\begin{enumerate}
    \item  If the Riemann hypothesis is true, then  
$$\Phi_\ell(x)= \frac{x^2}{2\zeta(\ell+1)}+O_{\ell,\epsilon}( x).$$
\item For any fixed integer $\ell \geq 1$, if for every $\varepsilon > 0$
\[
\Phi_\ell(x) = \frac{x^2}{2 \zeta(\ell+1)} + O_{\ell, \epsilon }\left(x^{1 - \frac{1}{2\ell} + \varepsilon}\right),
\]
then the Riemann hypothesis is true.
\end{enumerate}
 
\end{theorem}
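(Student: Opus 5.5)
For part (2) I would use a Mellin transform together with the pole structure from Corollary \ref{cor:continuation}. For part (1) I would combine Theorem \ref{thm:main} with the Littlewood criterion, but with a caveat about the linear error term that I will flag below.

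\textbf{Part (2).} Write $E_\ell(x):=\Phi_\ell(x)-\frac{x^2}{2\zeta(\ell+1)}$. For $\Re(s)>2$, partial summation gives
\[
\frac{F_\ell(s)}{s}=\int_1^\infty \Phi_\ell(x)\,x^{-s-1}\,dx
=\frac{1}{2\zeta(\ell+1)(s-2)}+\int_1^\infty E_\ell(x)\,x^{-s-1}\,dx .
\]
By hypothesis $E_\ell(x)=O(x^{1-\frac{1}{2\ell}+\varepsilon})$ for every $\varepsilon>0$. Hence the last integral converges absolutely and locally uniformly for $\Re(s)>1-\frac{1}{2\ell}$, so it defines a holomorphic function there.

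It follows that $F_\ell(s)$ continues holomorphically to $\Re(s)>1-\frac{1}{2\ell}$, apart from the simple pole at $s=2$. This uses uniqueness of meromorphic continuation, and the fact that the factor $s$ only introduces a possible issue at $s=0$, which lies outside this region.

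By Corollary \ref{cor:continuation}, every non-trivial zero $\rho$ produces a genuine pole at $s=1+\frac{\rho-1}{\ell}$. Each such pole must therefore satisfy $1+\frac{\Re\rho-1}{\ell}\le 1-\frac{1}{2\ell}$, that is, $\Re\rho\le\frac12$. The functional equation makes the zeros symmetric under $\rho\mapsto 1-\bar\rho$, so also $\Re\rho\ge\frac12$, and RH follows. The only point that needs care is justifying the Mellin identity and the non-cancellation of these poles, and the latter is already done in Corollary \ref{cor:continuation}.

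\textbf{Part (1).} Assume RH. Littlewood's criterion gives $M(y)=O(y^{1/2+\varepsilon})$. Partial summation then yields
\[
T(y)=\int_y^\infty t^{-\ell-1}\,dM(t)=O\bigl(y^{-\ell-\frac12+\varepsilon}\bigr).
\]
Substituting into \eqref{sl} gives $\frac{x^2}{2}T(x^{1/\ell})=O(x^{1-\frac{1}{2\ell}+\varepsilon})$, which is $O(x)$.

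This shows $\Phi_\ell(x)=\frac{x^2}{2\zeta(\ell+1)}+O_\ell(x\log x)$. To reach $O(x)$, I would go back into the proof of Theorem \ref{thm:main} and keep the $\theta$-terms exactly, with $\theta=\{x/d^\ell\}$. The surviving term of size $x$ is
\[
x\sum_{d\le x^{1/\ell}}\frac{\mu(d)}{d}\Bigl(\tfrac12-\{x/d^\ell\}\Bigr).
\]
The aim is to show this sum is $O(1)$, using cancellation in $\mu(d)/d$ (under RH, $\sum_{d\le y}\mu(d)/d=O(y^{-1/2+\varepsilon})$) together with a splitting of the range of $d$ at a parameter $D$.

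\textbf{Main obstacle.} I expect this cancellation step to be the real difficulty, and possibly an insurmountable one. The $\ell=1$ case of this sum is exactly the classical error term in $\sum_{n\le x}\varphi(n)$, where Montgomery's $\Omega$-result gives $\Omega(x\sqrt{\log\log x})$. So an $O(x)$ bound cannot hold for $\ell=1$ even under RH. For $\ell\ge2$, the fractional parts $\{x/d^\ell\}$ oscillate more slowly and van der Corput-type estimates for the large-$d$ range may help, but the small-$d$ range still seems to contribute a term of order about $x$ times a non-convergent oscillatory sum.

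I would therefore first prove the weaker statement with error $O_\ell(x\log x)$, or a version phrased with error exponent $1-\frac{1}{2\ell}+\varepsilon$ after subtracting the $\psi$-sum. I would attempt the $O(x)$ claim only for $\ell\ge2$, and I would flag that for $\ell=1$ it needs correction.
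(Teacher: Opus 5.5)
\textbf{Part (2).} Your part (2) is essentially the paper's argument. Both use the Mellin transform of $E_\ell$, its holomorphy in $\Re(s)>1-\frac{1}{2\ell}$, and the identity with $F_\ell(s)/s-\frac{1}{2\zeta(\ell+1)(s-2)}$. Both then use the non-cancelled poles at $1+\frac{\rho-1}{\ell}$ to force $\Re\rho\le\frac12$, and finish with the symmetry of the zeros. One thing worth knowing: the hypothesis of (2) is never satisfied. The main term is continuous, while $\Phi_\ell$ jumps by $\varphi_\ell(p)\ge p-1$ at every prime $p$. Hence $E_\ell(x)=\Omega(x)$ unconditionally, and the assumed bound already fails for $\varepsilon=\frac{1}{4\ell}$. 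So (2) is true only vacuously, in your version and in the paper's.

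\textbf{Part (1).} Your suspicion is right, and it pinpoints the error in the paper's proof. The paper uses $\sum_{m\le X}m=\frac{X^2}{2}+\frac{X}{2}+O(1)$, whereas in fact $\sum_{m\le X}m=\frac{X^2}{2}+X\bigl(\frac12-\{X\}\bigr)+O(1)$. Take $X=x/d^\ell$, multiply by $\mu(d)d^{\ell-1}$ and sum over $d$. The discarded piece is $-x\sum_{d\le x^{1/\ell}}\frac{\mu(d)}{d}\{x/d^\ell\}$. So the paper silently replaces exactly your sum $x\sum_{d\le x^{1/\ell}}\frac{\mu(d)}{d}\bigl(\frac12-\{x/d^\ell\}\bigr)$ by $\frac{x}{2}\sum_{d\le x^{1/\ell}}\frac{\mu(d)}{d}$.

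A quick consistency check shows the paper's argument proves too much. In its decomposition every non-main term is $O_\ell(x)$ \emph{unconditionally}:
\begin{itemize}
\item $\bigl|\sum_{d\le X}\mu(d)/d\bigr|\le 1$;
\item $|T(y)|\le\sum_{d>y}d^{-\ell-1}\ll_\ell y^{-\ell}$;
\item $\sum_{d\le x^{1/\ell}}d^{\ell-1}\ll_\ell x$.
\end{itemize}
So RH is never genuinely used. At $\ell=1$ the argument would give $\sum_{n\le x}\varphi(n)=\frac{3}{\pi^2}x^2+O(x)$ unconditionally, contradicting the Montgomery $\Omega_\pm(x\sqrt{\log\log x})$ theorem you cite. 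Since the conclusion of (1) fails unconditionally at $\ell=1$, statement (1) for $\ell=1$ amounts to asserting that RH is false; it cannot be proved without disproving RH. For $\ell\ge2$ the paper gives no valid argument either, because the same term is dropped. Retreating to the unconditional $O_\ell(x\log x)$ bound, as you propose, is the correct decision.

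One small correction to your fallback version. The leftover term $\frac12\sum_{d\le x^{1/\ell}}\mu(d)d^{\ell-1}(\theta_d^2-\theta_d)$, with $\theta_d=\{x/d^\ell\}$, is also only trivially $O_\ell(x)$. So subtracting the $\psi$-sum alone does not yield an error $O(x^{1-\frac{1}{2\ell}+\varepsilon})$ without a separate cancellation argument for that term.
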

\begin{proof}
(1)  Fix an integer $\ell \ge1$.  Let RH be true. Then for some $0<\eta<\tfrac12$ there is a constant $C_M(\eta)>0$ such that
\begin{equation} \label{eqM}
 |M(t)|\le C_M(\eta)\,t^{1/2+\eta}\qquad(t\ge1).   
\end{equation}

By Theorem \ref{thm:main},
\[
\;\Phi_\ell(x)=\frac{x^2}{2\zeta(\ell+1)} - \frac{x^2}{2}\,T(x^{1/\ell}) + O_{\ell}(x\log x).
\] 
Using the Stieltjes representation and (M) we have for every $y\ge1$,
\[
\begin{aligned}
|T(y)|
&= \Big|-\frac{M(y)}{y^{\ell+1}} + (\ell+1)\int_y^\infty \frac{M(t)}{t^{\ell+2}}\,dt\Big| \\
&\le \frac{|M(y)|}{y^{\ell+1}} + (\ell+1)\int_y^\infty \frac{|M(t)|}{t^{\ell+2}}\,dt \\
&\le C_M(\eta)\,y^{-\ell-1/2+\eta} \;+\; C_M(\eta)(\ell+1)\int_y^\infty t^{-\ell-3/2+\eta}\,dt.
\end{aligned}
\]
The integral is elementary (since $-\ell-3/2+\eta<-1$), so
\[
\int_y^\infty t^{-\ell-3/2+\eta}\,dt
= \frac{y^{-\ell-1/2+\eta}}{\ell+1/2-\eta}.
\]
Hence
\[
|T(y)| \le C_M(\eta)\, y^{-\ell-1/2+\eta}\,\Big(1 + \frac{\ell+1}{\ell+1/2-\eta}\Big),
\]
so with
\[
A(\ell,\eta):=1 + \frac{\ell+1}{\ell+1/2-\eta}
\]
we obtain the explicit bound
\[
|T(y)| \le C_M(\eta)\,A(\ell, \eta)\, y^{-\ell-1/2+\eta}\quad(y\ge1).
\]  
Put $y=x^{1/\ell}$. Then,
\begin{eqnarray*}
\Big|\frac{x^2}{2}T(x^{1/\ell})\Big|
&\le& \frac{x^2}{2}\cdot C_M(\eta) A(\ell, \eta)\, x^{-(\ell+1/2-\eta)/\ell}\\
&=& \frac{C_M(\eta)A(\ell, \eta)}{2}\, x^{1-1/(2\ell) + \eta/k} \\
&=& D(\ell, \eta) x^{1-1/(2\ell) + \eta/\ell},
\end{eqnarray*}
where
\[
D(\ell, \eta):=\frac{C_M(\eta)A(\ell, \eta)}{2}.
\]
Using 
\[
\sum_{m\le X} m = \frac{X^2}{2} + \frac{X}{2} + O(1),
\]
one gets (after the usual substitution $n=d^\ell m$ and summation over $d\le x^{1/\ell}$)
\[
\Phi_\ell(x)
= \frac{x^2}{2}\sum_{d\le x^{1/\ell}}\frac{\mu(d)}{d^{\ell+1}}
\;+\;\frac{x}{2}\sum_{d\le x^{1/\ell}}\frac{\mu(d)}{d}
\;+\; O\!\Big(\sum_{d\le x^{1/\ell}} d^{\,\ell-1}\Big).
\]
As proved before,
\[
\sum_{d\le x^{1/\ell}} d^{\ell-1} \le \Bigl(\frac{1}{\ell}+1\Bigr)x.
\]
Let \(c_\ell = \frac{1}{\ell}+1\). 
Next we bound the middle term
\[
\frac{x}{2}\sum_{d\le x^{1/\ell}}\frac{\mu(d)}{d}.
\]
Use partial summation: for any $X\ge1$,
\[
\sum_{d\le X}\frac{\mu(d)}{d} = \frac{M(X)}{X} + \int_1^X \frac{M(t)}{t^2}\,dt.
\]
With the bound in Inequality \ref{eqM} we get (for $0<\eta<1/2$)
\[
\Big|\sum_{d\le X}\frac{\mu(d)}{d}\Big|
\le C_M(\eta) X^{-1/2+\eta} + C_M(\eta)\int_1^X t^{-3/2+\eta}\,dt
\le C_M(\eta) X^{-1/2+\eta} + \frac{C_M(\eta)}{1/2-\eta}.
\]
Thus with $X=x^{1/\ell}$,
\[
\frac{x}{2}\Big|\sum_{d\le x^{1/\ell}}\frac{\mu(d)}{d}\Big|
\le \frac{x}{2}\Big( C_M(\eta) x^{-1/(2\ell)+\eta/\ell} + \frac{C_M(\eta)}{1/2-\eta}\Big).
\]
The first term on the right is of the same shape as the tail contribution and can be absorbed into the $D(\ell,\eta)$-term. The second term yields a contribution of size $\frac{C_M(\eta)}{2(1/2-\eta)}\,x.$ 
Collecting the elementary $O(x)$ pieces we obtain the linear coefficient
\[
E(\ell,\eta) \;=\; c_\ell \;+\; \frac{C_M(\eta)}{2(1/2-\eta)}.
\]
Combining the tail bound  and the elementary linear contributions yields precisely the claimed bound.\\
Now, fix \(\varepsilon>0\) and set
\[
\eta := \frac{\ell\varepsilon}{2} \quad(\text{note }0<\eta<1/2\ \text{for small }\varepsilon).
\]
Then the tail-exponent becomes
\[
1-\frac{1}{2\ell} + \frac{\eta}{\ell} \;=\; 1-\frac{1}{2\ell} + \frac{\varepsilon}{2}.
\]
Thus for every $x\ge1$ we have explicitly
\[
\Big|\Phi_\ell(x)-\frac{x^2}{2\zeta(\ell+1)}\Big|
\le D(\ell,\varepsilon)\, x^{1-\tfrac{1}{2\ell}+\tfrac{\varepsilon}{2}} + E(\ell,\varepsilon)\,x.
\]
We have $1-\frac{1}{2\ell}+\frac{\varepsilon}{2}< 1.$ Choose a constant $C(\ell,\varepsilon)$ sufficiently large and a threshold $X_0$ such that $C(\ell,\varepsilon)$   dominates both terms for all $x\ge X_0$. That is
\[
\Big|\Phi_\ell(x)-\frac{x^2}{2\zeta(\ell+1)}\Big| \le C(\ell,\varepsilon)\,x \qquad(x\ge X_0).
\]
(2) Let
\[
M = \frac{1}{2 \zeta(\ell+1)}.
\]

The hypothesis becomes
\[
\Phi_\ell(x) = M x^2 + E_\ell(x), \quad \text{with } E_\ell(x) = O\left(x^{\alpha + \varepsilon}\right) \ \forall \varepsilon > 0,
\]
where
\[
\alpha = 1 - \frac{1}{2\ell}.
\]
For $\Re(s) > \alpha$, consider
\begin{equation}\label{e1}
 G(s) := \int_1^\infty E_\ell(x) x^{-s-1} dx.   
\end{equation}
Since $|E_\ell(x)| \leq C_\varepsilon x^{\alpha + \varepsilon}$ for every $\varepsilon > 0$, the integral converges absolutely for $\Re(s) > \alpha$ (choose $\varepsilon$ such that $\alpha + \varepsilon < \Re(s)$). 
Now, let $K \subset \{ s \in \mathbb{C} : \Re(s) > \alpha \}$ be compact.  
Since the function $s \mapsto \Re(s)$ is continuous and $K$ lies in the open half-plane, there exists $\delta > 0$ such that
\[
\Re(s) \ge \alpha + \delta
\quad \text{for all } s \in K.
\]
 Choose $\varepsilon = \delta/2$. Then for every $s \in K$,
\[
\Re(s) \ge \alpha + 2\varepsilon.
\]
Hence,
\[
\alpha + \varepsilon - \Re(s)
\le
\alpha + \varepsilon - (\alpha + 2\varepsilon)
=
-\varepsilon.
\]
Using the bound $|E_\ell(x)| \le C_\varepsilon x^{\alpha+\varepsilon}$, we obtain
\[
|E_\ell(x)x^{-s-1}|
\le
C_\varepsilon x^{\alpha+\varepsilon-\Re(s)-1}
\le
C_\varepsilon x^{-1-\varepsilon}.
\]
Define
\[
g(x) := C_\varepsilon x^{-1-\varepsilon}.
\]
Since $\varepsilon>0$,
\[
\int_1^\infty g(x)\,dx
=
C_\varepsilon \int_1^\infty x^{-1-\varepsilon}dx
=
\frac{C_\varepsilon}{\varepsilon}
< \infty.
\]
For any $A>1$ and all $s \in K$,
\[
sup_{s \in K}\left|
\int_A^\infty E_\ell(x)x^{-s-1}\,dx
\right|
\le
\int_A^\infty g(x)\,dx.
\]
Since the right-hand side tends to $0$ as $A \to \infty$, it follows that
\[
\int_1^\infty E_\ell(x)x^{-s-1}\,dx
\]
converges uniformly on $K$. Hence, $G(s)$ is analytic in the half-plane $\Re(s) > \alpha$. We aim  to relate $G(s)$ to $F_\ell(s)$. To this end, we show that 
\begin{equation} \label{11}
 \int_1^\infty \Phi_\ell(x) x^{-s-1} dx = \frac{F_\ell(s)}{s}.   
\end{equation}
 Since $\Phi_\ell(x) = \sum_{n \leq x} \phi_\ell(n)$, we can write it as
\[
\Phi_\ell(x) = \sum_{n=1}^\infty \phi_\ell(n) \mathbf{1}_{\{n \leq x\}},
\]
where $\mathbf{1}_{\{n \leq x\}}$ equals 1 if $n \leq x$ and 0 otherwise. Substituting this into the integral gives
\begin{equation}\label{e2}
\int_1^\infty \Phi_\ell(x) x^{-s-1} dx = \int_1^\infty \left( \sum_{n=1}^\infty \phi_\ell(n) \mathbf{1}_{\{n \leq x\}} \right) x^{-s-1} dx.
\end{equation}
 For $\Re(s) > 2$, the series converges absolutely. To see this, note that
\[
|\phi_\ell(n) \mathbf{1}_{\{n \leq x\}} x^{-s-1}| \leq |\phi_\ell(n)| |x^{-\Re(s)-1}|.
\]
Since $\phi_\ell(n) = O(n)$ (in fact, $\phi_\ell(n) \leq n$), the sum $\sum_{n=1}^\infty |\phi_\ell(n)| n^{-\Re(s)}$ converges for $\Re(s) > 2$. Moreover, for each fixed $n$,
\[
\int_1^\infty \mathbf{1}_{\{n \leq x\}} x^{-s-1} dx = \int_n^\infty x^{-s-1} dx
\]
converges absolutely for $\Re(s) > 0$. Hence, by Fubini's theorem, we may interchange the sum and integral in Equation \ref{e2} to obtain
\begin{equation} \label{e3}
 \int_1^\infty \Phi_\ell(x) x^{-s-1} dx = \sum_{n=1}^\infty \phi_\ell(n) \int_1^\infty \mathbf{1}_{\{n \leq x\}} x^{-s-1} dx.   
\end{equation}
For each $n$,
\[
\int_1^\infty \mathbf{1}_{\{n \leq x\}} x^{-s-1} dx = \int_n^\infty x^{-s-1} dx.
\]
Since $\Re(s) > 0$, then
\[
\int_n^\infty x^{-s-1} dx = \frac{n^{-s}}{s}.
\]
By Equation \ref{e3}, it follows that
\[
\int_1^\infty \Phi_\ell(x) x^{-s-1} dx = \sum_{n=1}^\infty \phi_\ell(n) \frac{n^{-s}}{s} = \frac{1}{s} \sum_{n=1}^\infty \phi_\ell(n) n^{-s} = \frac{F_\ell(s)}{s},
\]
This proves Equation \ref{11}.
The series $F_\ell(s) = \sum_{n=1}^\infty \phi_\ell(n) n^{-s}$ converges absolutely for $\Re(s) > 2$ because $|\phi_\ell(n)| \leq n$, so termwise integration is justified.
In addition,
\[
\int_1^\infty M x^2 x^{-s-1} dx = M \int_1^\infty x^{1 - s} dx = \frac{M}{s - 2}, \quad \Re(s) > 2.
\]
Therefore, for $\Re(s) > 2$,
\[
G(s) = \int_1^\infty E_\ell(x) x^{-s-1} dx = \int_1^\infty \left(\Phi_\ell(x) - M x^2 \right) x^{-s-1} dx = \frac{F_\ell(s)}{s} - \frac{M}{s - 2}.
\]
Second, we aim to find an analytic continuation  of $G(s)$ . Define
\[
H(s) := \frac{F_\ell(s)}{s} - \frac{M}{s - 2}.
\]
The right-hand side of the above is meromorphic on $\mathbb{C}$. Since the left-hand side $G(s)$ is analytic for $\Re(s) > \alpha$, equality holds by analytic continuation for all $s$ with $\Re(s) > \alpha$. Hence, $H(s)$ is analytic for $\Re(s) > \alpha$. Now examine the poles of $H(s)$:
\begin{itemize}
    \item The term $\frac{M}{s-2}$ has a simple pole at $s=2$ with residue $M$.
    \item The term $\frac{F_\ell(s)}{s}$ has poles where $F_\ell(s)$ has poles, and possibly at $s=0$ (from the denominator $s$), but $s=0$ is not in the region $\Re(s) > \alpha$ for $k \geq 1$.
\end{itemize}
At $s=2$, $F_\ell(s)$ has a simple pole with residue
\[
\frac{1}{\zeta(\ell+1)} = 2M.
\]
Thus,
\[
\mathrm{Res}_{s=2} \frac{F_\ell(s)}{s}  = M.
\]
Therefore, the residues of the two terms at $s=2$ cancel, so $H(s)$ is analytic at $s=2$.

Consequently, $H(s)$ has no poles in the half-plane $\Re(s) > \alpha$. Since
\[
H(s) = \frac{F_\ell(s)}{s} - \frac{M}{s-2}
\]
and $\frac{M}{s-2}$ is analytic for $s \neq 2$, it follows that $\frac{F_\ell(s)}{s}$ has no poles in $\Re(s) > \alpha$ except possibly at $s=2$, and even there the pole is canceled. Because $1/s$ is analytic and nonzero for $s \neq 0$, we conclude that $F_\ell(s)$ has no poles with $\Re(s) > \alpha$ except at $s=2$.\\

From Corollary 2.5, every nontrivial zero $\rho$ of $\zeta(s)$ gives a pole of $F_\ell(s)$ at
\[
s_\rho = 1 + \frac{\rho - 1}{\ell}.
\]
Thus,
\[
\Re(s_\rho) = 1 + \frac{\beta - 1}{\ell}.
\]
If $\beta > \frac{1}{2}$, then
\[
\Re(s_\rho) > 1 + \frac{\frac{1}{2} - 1}{\ell} = 1 - \frac{1}{2\ell} = \alpha.
\]
Thus, such a pole would lie in the half-plane $\Re(s) > \alpha$, contradicting the fact that $F_\ell(s)$ has no poles there (except at $s=2$). Therefore, we must have
\[
\beta \leq \frac{1}{2}
\]
for every nontrivial zero $\rho$.

The functional equation of the Riemann zeta function implies that if $\rho$ is a nontrivial zero, then so is $1 - \rho$. If $\beta \leq \frac{1}{2}$, then
\[
1 - \beta \geq \frac{1}{2},
\]
so the zeros are symmetric with respect to the critical line $\Re(s) = \frac{1}{2}$. Combining both inequalities yields
\[
\beta = \frac{1}{2}
\]
for every nontrivial zero.

Thus, all nontrivial zeros of $\zeta(s)$ lie on the critical line $\Re(s) = \frac{1}{2}$, which is the Riemann hypothesis.
\end{proof}
\begin{remark}
The trivial zeros of $\zeta(s)$ at $\rho = -2n$ ($n \geq 1$) give poles of $F_\ell(s)$ at
\[
s = 1 + \frac{-2n - 1}{\ell} = 1 - \frac{2n + 1}{\ell}.
\]
For $n \geq 1$, $1 - \frac{2n + 1}{\ell} > \alpha$ if and only if $2n + 1 < \frac{1}{2},$
which is impossible. Hence, these poles lie in $\Re(s) \leq \alpha$ and do not affect the analyticity of $H(s)$ for $\Re(s) > \alpha$.
\end{remark}

\end{document}